\theoremstyle{plain}
\newtheorem{theorem}{Theorem}[section]
\newtheorem{prop}[theorem]{Proposition}
\theoremstyle{definition}
\newtheorem{rem}[theorem]{Remark}
\newcommand{\Z}{\mathbb{Z}}
\newcommand{\C}{\mathbb{C}}
\newcommand{\BAG}{B^A(G)}
\newcommand{\BAH}{B^A(H)}
\newcommand{\gBAG}{\widetilde{B^A}(G)}
\newcommand{\qBAG}{\overline{B^A}(G)}
\newcommand{\qBAH}{\overline{B^A}(H)}
\newcommand{\MAG}{\mathcal{M}^A_G}
\newcommand{\MAH}{\mathcal{M}^A_H}
\newcommand{\SG}{\mathcal{S}_G}
\newcommand{\SH}{\mathcal{S}_H}
\newcommand{\thetaS}{\theta_{\mathcal{S}}}
\newcommand{\thetaBS}{\theta_{[\mathcal{S}]}}
\newcommand{\lexp}[2]{\setbox0=\hbox{$#2$} \setbox1=\vbox to
                 \ht0{}\,\box1^{#1}\!#2}
\newcommand{\Hom}{\mathrm{Hom}}
\title{Groups with isomorphic fibered Burnside rings}
\author{
Robert Boltje\\
Department of Mathematics\\
University of California\\
Santa Cruz, CA 95064\\
U.S.A\\
\texttt{boltje@ucsc.edu}
\and 
Benjam\'in Garc\'ia\\
Centro de Ciencias Matem\'aticas\\
Universidad Nacional Aut\'onoma de M\'exico\\
Morelia, Michoac\'an 58089\\
M\'exico\\
\texttt{bagh1704@hotmail.com}}
\begin{document}

\maketitle

\begin{abstract}
Let $G$ and $H$ be finite groups. We give a condition on $G$ and $H$ that implies that the $A$-fibered Burnside rings $B^A(G)$ and $B^A(H)$ are isomorphic. As a consequence, we show the existence of non-isomorphic groups $G$ and $H$ such that $B^A(G)$ and $B^A(H)$ are isomorphic rings. Here, the abelian fiber group $A$ can be chosen in a non-trivial way, that is, such that $B^A(G)$ and $B^A(H)$ are strictly bigger than the Burnside rings of $G$ and $H$, for which such counterexamples are already known.

\smallskip
\textbf{Keywords}: Isomorphism problem; fibered Burnside ring; ring of monomial representations. 

\smallskip
\textbf{2020 Mathematics Subject Classification}:19A22
\end{abstract}

\section{Introduction}

If $A$ is an abelian group, the \textit{$A$-fibered Burnside ring} of a finite group $G$, denoted by $\BAG$, is the Grothendieck ring of the category of $A$-fibered $G$-sets. An $A$-fibered $G$-set is a $G\times A$-set which is free as $A$-set and has finitely many $A$-orbits. The category of $A$-fibered $G$-sets has coproducts and is symmetric monoidal with respect to a tensor product $\otimes_A$. These operations yield the ring structure on $\BAG$.
The $A$-fibered Burnside ring was first introduced in greater generality by Dress in \cite{Dress}, where an action of the group $G$ on $A$ was allowed and the notation $\Omega(G,A)$ was used. We restrict ourselves to the case of the trivial action of $G$ on $A$ and use the notation $B^A(G)$.

The case $A=k^{\times}$ for a field $k$ is particularly interesting, since in this case the ring $B^{k^\times}(G)$ is also isomorphic to the Grothendieck ring $D^k(G)$ of the category of finite $G$-line bundles over $k$ (introduced in \cite{Boltje2001} and denoted by $R_{k+}^{\mathrm{ab}}(G)$), which is also sometimes called the category of monomial $k$-representations of $G$ (see \cite[Section~5]{Boltje2001}). There is a direct connection to $kG$-modules: One has a natural ring homomorphism $D^k(G)\to R_k(G)$ into the Grothendieck ring of finitely generated $kG$-modules which is surjective if $k$ is algebraically closed. For $k=\C$, a natural section of $D^{\C}(G)\to R_{\C}(G)$, i.e., a canonical induction formula, was given in \cite{Bolt2}, and a categorification of the canonical induction formula (a monomial resolution in terms of $G$-line bundles over $\C$) was given in \cite{Boltje2001}.

Recalling that $\BAG \cong B(G)$, the Burnside ring of $G$, if and only if $A$ has trivial $|G|$-torsion, we will say that two non-isomorphic finite groups $G$ and $H$ provide a \textit{non-trivial counterexample to the isomorphism problem of the $A$-fibered Burnside ring} if $\BAG$ and $\BAH$ are isomorphic as rings and there is a non-trivial element $a$ in $A$ such that $a^{|G|}=1$. Examples of non-isomorphic finite groups $G$ and $H$ with isomorphic Burnside rings have already been given for instance by Th\'evenaz in \cite{Thev}.

The notion of a \textit{species isomorphism} for fibered Burnside rings was introduced by the second author in \cite{Gar1} for a ring isomorphism preserving the standard bases given by conjugacy classes of monomial pairs, which is analogous to an isomorphism of mark tables in the case of Burnside rings. In Section 2 of this note we present a sufficient condition on finite groups $G$ and $H$ for the existence of a species isomorphism between their fibered Burnside rings. In Section 3, we use this result to prove that Thévenaz' counterexamples to the isomorphism problem for Burnside rings (see \cite{Thev}) of order $p^2q$ for primes $p$ and $q$ such that $q|(p-1)$, provide also non-trivial counterexamples for the $A$-fibered Burnside ring when the fiber $A$ has trivial $p$-torsion and elements of order $q$.

\subsection*{Notation}
Throughout this note, the letters $G$ and $H$ stand for finite groups. 
We write $\SG$ for the set of subgroups of $G$, and $[\SG]\subseteq \SG$ for a set of representatives of the conjugacy classes. For an element $g\in G$, we denote the resulting conjugation map $c_g$ also by ${^g\_}\colon G\to G,\ x\mapsto gxg^{-1}$. For subgroups $K$ and $L$ of $G$ we write $K=_G L$ and $K\le_G L$ if there exists $g\in G$ with $K=\lexp{g}{L}$ and $K\le \lexp{g}{L}$, respectively. If $G$ acts on a set $X$, we write $G\backslash X$ for the set of its orbits and we write $x=_Gy$ if two elements $x$ and $y$ of $X$ are in the same orbit.

\section{A criterion for species isomorphisms}

We first recall some basic definitions and results on fibered Burnside rings. We refer the reader to \cite{BY}, \cite{Dress} and \cite{Gar1} for further details. 

Let $\MAG$ denote the set of all pairs $(K,\phi)$, also called \textit{subcharacters} or \textit{monomial pairs}, where $K\leq G$ and $\phi\colon K\longrightarrow A$ is a group homomorphism. 
The group $G$ acts on $\MAG$ by $\lexp{g}{(K,\phi)}:=(\lexp{g}{K},\lexp{g}{\phi})$, where $\lexp{g}{\phi}(x):=\phi(g^{-1}xg)$ for $g\in G$, $(K,\phi)\in\MAG$ and $x\in \lexp{g}{K}$.
By \cite[Proposition~2.2]{Dress}, the transitive $A$-fibered $G$-sets are parametrized by the $G$-orbits of $\MAG$. 
More precisely, if $X$ is a transitive $A$-fibered $G$-set, one associates to it the pair $(K,\phi)$ consisting of the $G$-stabilizer $K$ of the $A$-orbit $Ax$ of a fixed element $x\in X$ and the homomorphism $\phi\colon K\to A$ defined by $gx=\phi(g)x$ for all $g\in K$. The orbit of $(K,\phi)$ does not depend on the choice of $x$ and is denoted by $[K,\phi]_G$. Thus, $\BAG$ can be regarded as the free abelian group with basis $G\backslash \MAG$. By \cite[2.3]{Dress}, the multiplication is given by 
$$[K,\phi]_G\cdot [L,\psi]_G=
\sum_{KsL\in K\backslash G/ L}\bigl[K\cap {^sL},\phi|_{K\cap {^sL}}{^s\psi}|_{K\cap {^sL}}\bigr]_G\,,$$
with identity element $[G,1]_G$. That this multiplication yields a commutative ring structure on $\BAG$ follows from the properties of the tensor product $\otimes_A$ of $A$-fibered $G$-sets, see \cite[Section~1]{Dress}.

\smallskip
For $K\le G$, $\Z\Hom(K,A)$ is the group ring of $\Hom(K,A)$ which is a group under point-wise multiplication. The group $G$ acts by conjugation on the resulting product ring $\prod_{K\leq G}\Z \Hom(K,A)$. More precisely, for $g\in G$ and $(x_K)_{K\le G}\in\prod_{K\le G}\Z\Hom(K,A)$, one has
$$\lexp{g}{(x_K)_{K\le G}}:=\bigl(\lexp{g}{x_{\lexp{g^{-1}}{K}}}\bigr)_{K\le G}\,,$$
where, for $L\le G$ and $y=\sum_\phi a_\phi \phi\in\Z\Hom(L,A)$, we set $\lexp{g}{y}:=\sum_\phi a_\phi\lexp{g}{\phi}\in\Z\Hom(\lexp{g}{L},A)$.
Thus, if for $(K,\phi)\in\MAG$ the element $b_{(K,\phi)}\in \prod_{K\le G}\Z\Hom(K,A)$ is defined as $b_{(K,\phi)}=(x_L)_{L\le G}$ with $x_L=\phi$ if $L=K$ and $x_L=0$ otherwise, then these elements form a $\Z$-basis of $\prod_{K\le G}\Z\Hom(K,A)$ and $\lexp{g}{b_{(K,\phi)}}=b_{\lexp{g}{(K,\phi)}}$. 

\smallskip
The \textit{ghost ring} of $\BAG$ is defined as the subring $\widetilde{B^A}(G):=\left(\prod_{K\leq G}\Z \Hom(K,A)\right)^G$ of $G$-fixed points under this action. Since the elements $b_{(K,\phi)}$, $(K,\phi)\in\MAG$, form a $\Z$-basis of $\prod_{K\le G}\Z\Hom(K,A)$ that is permuted by $G$, their orbit sums $\widetilde{b}_{(K,\phi)}:=\sum_{g\in [G/N_G(K,\phi)]} \lexp{g}{b_{(K,\phi)}}$, where $(K,\phi)$ runs through a set of representatives of the $G$-orbits of $\MAG$ and $N_G(K,\phi)$ denotes the $G$-stabilizer of $(K,\phi)$, form a $\Z$-basis of $\widetilde{B^A}(G)$.

\smallskip
The natural projection map $\pi_{[\SG]}:\prod_{K\leq G}\Z \Hom(K,A)\longrightarrow \prod_{K\in [\SG]}\Z \Hom(K,A)$ is a ring homomorphism, and it is injective when restricted to $\gBAG$,  since the components indexed by elements outside $[\SG]$ are determined by the components indexed by elements in $[\SG]$. Moreover, one has
$$\gBAG\cong \pi_{[\SG]}\left(\gBAG\right)=\prod_{K\in [\SG]}\left(\Z \Hom(K,A)\right)^{N_G(K)}\,.$$
In fact, each $K$-component of a $G$-fixed point of $\prod_{K\le G}\Z\Hom(K,A)$ must be an $N_G(K)$-fixed point. For the converse, note that for $K\in[\SG]$, the $N_G(K)$-orbit sums of the elements $\phi\in\Hom(K,A)$ form a $\Z$-basis of $(\Z\Hom(K,A))^{N_G(K)}$ and that the $N_G(K)$-orbit sum of $\phi$ is equal to the $K$-component of $\widetilde{b}_{(K,\phi)}$.
We set $\qBAG:=\prod_{K\in [\SG]}\left(\Z \Hom(K,A)\right)^{N_G(K)}$ and, for $K\in[\SG]$ and $(K,\phi)\in\MAG$, we set $\overline{b}_{(K,\phi)}:=\pi_{[\SG]}\left(\widetilde{b}_{(K,\phi)}\right)$. Thus, the $K$-component of $\overline{b}_{(K,\phi)}$ is equal to the $N_G(K)$-orbit sum of $\phi$ and its $L$-component is $0$ for all $L\in[\SG]$ with $L\neq K$.

\smallskip
By \cite[2.5]{Dress}, the map
$$\Phi^A_G\colon B^A(G)\longrightarrow \widetilde{B^A}(G)\,,\quad [L,\psi]_G\mapsto 
\left(\sum_{\phi\in\Hom(K,A)}\gamma^G_{(K,\phi),(L.\psi)}\phi\right)_{K\leq G}\,,$$
is an injective ring homomorphism, also known as the \textit{mark morphism}, 
where
$$\gamma^G_{(K,\phi),(L,\psi)}=|\{sL\in G/L\;|\; (K,\phi)\leq {^s(L,\psi)} \}|\,,$$
for $(K,\phi),(L,\psi)\in \MAG$. Several properties of these numbers are listed in \cite[Section 1]{Bolt} and in \cite[Lemma 2.2]{Gar1}. We set $\overline{\Phi}^A_G:=\pi_{[\SG]}\circ\Phi^A_G\colon \BAG\to\prod_{K\in[\SG]} \bigl(\Z\Hom(K,A)\bigr)^{N_G(K)}$.

\smallskip
If $H$ is another finite group, a ring isomorphism $\Theta:\BAG\longrightarrow \BAH$ is called a \textit{species isomorphism} if $\Theta([K,\phi]_G)=[R,\rho]_H\in H\backslash \MAH$ for every $[K,\phi]_G\in G\backslash \MAG$, see \cite[Def. 3.1]{Gar1}. We will make use of the following theorem which is part of the statement of Theorem~3.14 in \cite{Gar1}.

\begin{theorem}\thlabel{Previous Species Thm}
Let $A$ be an abelian group and let $G$ and $H$ be finite groups. There exists a species isomorphism from $B^A(G)$ to $B^A(H)$ if and only if there exist bijections $\thetaS\colon \SG\longrightarrow \SH$ and $\theta_K\colon \Hom(K,A)\longrightarrow\Hom(\thetaS(K),A)$, for $K\le G$, satisfying the following two conditions:

\smallskip
{\rm (a)} $\gamma_{(\thetaS(K),\theta_K(\phi)), (\thetaS(L),\theta_L(\psi))}^H = \gamma_{(K,\phi),(L,\psi)}^G $ for all $(K,\phi),(L,\psi)\in\MAG$.

\smallskip
{\rm (b)} The group homomorphism $\widetilde{\Theta}\colon \widetilde{B^A}(G) \longrightarrow \widetilde{B^A}(H)$ determined by mapping $\widetilde{b}_{(K,\phi)}$ to $\widetilde{b}_{(\thetaS(K),\theta_K(\phi))}$, for $(K,\phi)\in\MAG$, is a ring isomorphism.
\end{theorem}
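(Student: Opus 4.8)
The statement is an equivalence, and I would prove the two implications separately, putting most of the effort into the ``if'' direction, which is the one the rest of the paper relies on. For that direction, assume $\thetaS$ and the bijections $\theta_K$ are given and satisfy (a) and (b). The first step is to record how the mark morphism looks in the basis $\{\widetilde b_{(K,\phi)}\}$ of $\gBAG$: unwinding the definitions of $\Phi^A_G$ and of the orbit sums $\widetilde b_{(K,\phi)}$, together with the fact --- immediate from the $G$-invariance of the $\gamma$-numbers recalled in \cite[Lemma~2.2]{Gar1} --- that $\gamma^G_{(K,\phi),(L,\psi)}$ is constant on the $N_G(K)$-orbit of $\phi$ for fixed $(L,\psi)$, one gets
\[\Phi^A_G([L,\psi]_G)=\sum_{[K,\phi]_G\in G\backslash\MAG}\gamma^G_{(K,\phi),(L,\psi)}\,\widetilde b_{(K,\phi)}\,,\]
and likewise over $H$. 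Note that (b) already forces the assignment $(K,\phi)\mapsto(\thetaS(K),\theta_K(\phi))$ to descend to a bijection $\beta\colon G\backslash\MAG\to H\backslash\MAH$. Applying the ring isomorphism $\widetilde\Theta$ of (b) to the displayed identity, rewriting each $\gamma^G$-coefficient as the corresponding $\gamma^H$-coefficient via (a), and re-indexing by $\beta$, the right-hand side becomes $\Phi^A_H([\thetaS(L),\theta_L(\psi)]_H)$ by the $H$-analogue of the identity. Thus, if $\Theta\colon\BAG\to\BAH$ is defined on the standard basis by $[L,\psi]_G\mapsto[\thetaS(L),\theta_L(\psi)]_H$ (well defined on $G$-orbits because $\beta$ is), we obtain $\widetilde\Theta\circ\Phi^A_G=\Phi^A_H\circ\Theta$. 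As $[L,\psi]_G$ runs over all classes, the right-hand side runs over $\Phi^A_H$ of all standard basis elements of $\BAH$, so $\widetilde\Theta$ carries $\Phi^A_G(\BAG)$ isomorphically onto $\Phi^A_H(\BAH)$; since $\Phi^A_G$ and $\Phi^A_H$ are injective ring homomorphisms onto their images, $\Theta=(\Phi^A_H)^{-1}\circ\widetilde\Theta\circ\Phi^A_G$ is a ring isomorphism, and it is a species isomorphism by construction.

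For the ``only if'' direction, let $\Theta\colon\BAG\to\BAH$ be a species isomorphism; it maps the standard basis to the standard basis, hence induces a bijection $\beta\colon G\backslash\MAG\to H\backslash\MAH$ with $\Theta([K,\phi]_G)=[\beta(K,\phi)]_H$. The crux is to show that $\beta$ preserves the $\gamma$-numbers, that is, $\gamma^H_{\beta(K,\phi),\beta(L,\psi)}=\gamma^G_{(K,\phi),(L,\psi)}$ for all pairs --- the $A$-fibered analogue of the fact that a species isomorphism of Burnside rings is precisely an isomorphism of tables of marks. I would obtain this by passing to rational coefficients and conjugating by the mark morphisms to get a $\Q$-algebra isomorphism $\widetilde\Theta_{\Q}\colon\Q\otimes\gBAG\to\Q\otimes\gBAH$ with $\widetilde\Theta_{\Q}\circ\Phi^A_G=\Phi^A_H\circ\Theta$; using the intrinsic description of $\gBAG$ inside $\Q\otimes\gBAG$ and of the image of the mark morphism (see \cite{BY}), one checks that $\widetilde\Theta_{\Q}$ restricts to a ring isomorphism $\widetilde\Theta\colon\gBAG\to\gBAH$, and then, feeding the triangular identity of the first part into $\widetilde\Theta\circ\Phi^A_G=\Phi^A_H\circ\Theta$ --- the matrix $(\gamma^G_{(K,\phi),(L,\psi)})$ being triangular for the partial order $[K,\phi]_G\le[L,\psi]_G$ with diagonal entries $\gamma^G_{(L,\psi),(L,\psi)}=[N_G(L,\psi):L]$ --- one reads off that $\widetilde\Theta(\widetilde b_{(K,\phi)})=\widetilde b_{\beta(K,\phi)}$ and that the $\gamma$-numbers are preserved. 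Given this, $\beta$ is order-preserving, so the $H$-conjugacy class of the first component of $\beta([K,\phi]_G)$ depends only on the $G$-conjugacy class of $K$; fixing representatives and using the preserved indices $[N_G(K,\phi):K]=[N_H(R,\rho):R]$, one lifts the induced bijection on conjugacy classes of subgroups to a bijection $\thetaS\colon\SG\to\SH$ and lifts the induced maps on $N_G(K)$-orbits of $\Hom(K,A)$ to bijections $\theta_K\colon\Hom(K,A)\to\Hom(\thetaS(K),A)$ with $\beta([K,\phi]_G)=[\thetaS(K),\theta_K(\phi)]_H$. Then (a) is exactly the preservation of the $\gamma$-numbers and (b) holds because $\widetilde\Theta$ is the ring isomorphism $\widetilde b_{(K,\phi)}\mapsto\widetilde b_{(\thetaS(K),\theta_K(\phi))}$.

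The main obstacle lies in the ``only if'' direction: showing that a species isomorphism preserves the table of marks needs structural input about the ghost ring and the image of the mark morphism rather than merely the abstract ring structure of $\BAG$, and once that is available some bookkeeping is needed to repackage the orbit-level data --- on $G\backslash\MAG$, on conjugacy classes of subgroups, and on $N_G(K)$-orbits in $\Hom(K,A)$ --- into the genuine bijections $\thetaS$ and $\theta_K$, keeping careful track of normalizers and point stabilizers. The ``if'' direction, by contrast, is a formal consequence of (a), (b), and the expansion of $\Phi^A_G$ in the $\widetilde b$-basis.
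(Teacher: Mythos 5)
First, a point of comparison: the paper does not prove this statement at all --- it is quoted verbatim as part of Theorem~3.14 of \cite{Gar1}, so the ``paper's own proof'' is a citation. Your ``if'' direction is correct and is essentially the same technique the paper uses for the converse direction of its Theorem~\ref{SpeciesThm}: expand $\Phi^A_G([L,\psi]_G)=\sum_{[K,\phi]_G}\gamma^G_{(K,\phi),(L,\psi)}\widetilde b_{(K,\phi)}$ (a finite sum, since $\gamma^G_{(K,\phi),(L,\psi)}\neq 0$ forces $(K,\phi)\leq_G(L,\psi)$), use (a) and the induced orbit bijection to get $\widetilde\Theta\circ\Phi^A_G=\Phi^A_H\circ\Theta$, and conclude from injectivity of the mark morphisms. (Minor quibble: the well-definedness of the orbit map comes from (a), via \cite[Lemma 2.2]{Gar1}, as in Remark~\ref{old remark}, not really from (b).)

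The ``only if'' direction, however, has a genuine gap exactly at the step you yourself flag as the crux. You conjugate $\Theta$ by the rationalized mark morphisms to get $\widetilde\Theta_{\Q}$ and then assert that ``the intrinsic description of $\gBAG$ inside $\Q\otimes\gBAG$'' (citing \cite{BY}) lets you restrict $\widetilde\Theta_{\Q}$ to an isomorphism $\gBAG\to\gBAH$, and that triangularity then forces $\widetilde\Theta(\widetilde b_{(K,\phi)})=\widetilde b_{\beta(K,\phi)}$. Neither assertion is justified, and the analogy with the ordinary Burnside ring breaks down at both points. For $B(G)$ the ghost ring is the maximal order (a product of copies of $\Z$) and its standard basis consists of the primitive idempotents, so any ring isomorphism automatically preserves both; but when $A$ has nontrivial $|G|$-torsion, $\gBAG=\prod_{K\in[\SG]}(\Z\Hom(K,A))^{N_G(K)}$ is \emph{not} integrally closed in $\Q\otimes\gBAG$ (already $\Z[C_p]$ is not), so there is no obvious isomorphism-invariant characterization of it inside the rationalization, and \cite{BY} does not supply one. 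Moreover the elements $\widetilde b_{(K,\phi)}$ are orbit sums of group-like elements, not idempotents, so even granting an integral restriction there is no reason a priori that it carries this basis to the corresponding basis of $\gBAH$ rather than to some other $\Z$-basis. Note also that, unlike the Burnside ring case (where the mark $m_{K,L}$ is literally the coefficient of $[G/K]$ in $[G/K]\cdot[G/L]$), the numbers $\gamma^G_{(K,\phi),(L,\psi)}$ are not structure constants of $\BAG$ with respect to the standard basis: the coefficient of $[K,\phi]_G$ in $[K,\phi]_G\cdot[L,\psi]_G$ counts cosets $sL$ with $K\le\lexp{s}{L}$ and $\phi\cdot\lexp{s}{\psi}|_K$ merely $N_G(K)$-conjugate to $\phi$, not equal to it. So the preservation of the $\gamma$-numbers and of the ghost basis by a species isomorphism is a genuinely nontrivial statement; it is precisely the content of \cite[Theorem~3.14]{Gar1}, and your sketch does not yet contain an argument for it. The subsequent bookkeeping (lifting $\beta$ to $\thetaS$ and to the $\theta_K$ using the preserved indices $[N_G(K,\phi):K]$, $[N_G(K):K]$, $[G:K]$) is fine, but only conditional on that missing step.
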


\begin{rem}\label{old remark}
In order to clarify the statement in Theorem~\ref{Previous Species Thm}, we will show that the map $\widetilde{\Theta}$ in (b) is well-defined, provided that the condition in (a) holds.  That is, if $(K,\phi), (L,\psi) \in \MAG$ are $G$-conjugate, then 
$\widetilde{b}_{(\thetaS(K),\theta_K(\phi))}=\widetilde{b}_{(\thetaS(L),\theta_L(\psi))}\in \widetilde{B^A}(H)$.  For this it suffices to show that $(\thetaS(K),\theta_K(\phi))$ and $(\thetaS(L), \theta_{L}(\psi))$ are $H$-conjugate. By part 2 of \cite[Lemma 2.2]{Gar1}, $(K,\phi)=_G(L,\psi)$ if and only if $0\neq \gamma^G_{(K,\phi),(L,\psi)}$ and $0\neq \gamma^G_{(L,\psi),(K,\phi)}$. Thus, the condition in (a) implies that $(K,\phi)=_G(L,\psi)$ if and only if $(\thetaS(K),\theta_K(\phi))=_H(\thetaS(L), \theta_{L}(\psi))$. 

For later use in the proof of Theorem~\ref{SpeciesThm} we note that if $K=_GL$ then $(K,1)=_G(L,1)$ and, by the above, $(\thetaS(K),\theta_K(1))=_H(\thetaS(L),\theta_L(1))$, which implies that $\thetaS(K)=_H\thetaS(L)$. Conversely, if $\thetaS(K)=_H\thetaS(L)$, then $(K,\theta_K^{-1}(1))=_G(L,\theta_L^{-1}(1))$, implying $K=_GL$.  Thus, the condition in (a) also implies that $K=_GL$ if and only if $\thetaS(K)=_H\thetaS(L)$.
\end{rem}

Next we prove a slight modification of the above theorem.

\begin{theorem} \thlabel{SpeciesThm}
Let $A$ be an abelian group and let $G$ and $H$ be finite groups. Then there exists a species isomorphism from $\BAG$ to $\BAH$ if and only if for any given $[\SG]$ and $[\SH]$, there are bijections $\thetaBS\colon [\SG]\longrightarrow [\SH]$ and $\theta_K\colon\Hom(K,A)\longrightarrow \Hom(\thetaBS(K),A)$ for $K\in [\SG]$, satisfying the following two conditions:

\smallskip
{\rm (a)} $\gamma^H_{\left(\thetaBS(K),\theta_{K}(\phi)\right),\left(\thetaBS(L),\theta_L(\psi)\right)}=\gamma^G_{\left(K,\phi\right),\left(L,\psi\right)}$ for all $K,L\in [\SG]$, $\phi\in \Hom(K,A)$ and $\psi\in \Hom(L,A)$.

\smallskip
{\rm (b)} The map $\overline{\Theta}\colon \qBAG \longrightarrow \qBAH$, $\overline{b}_{(K,\phi)}\mapsto \overline{b}_{(\thetaBS(K),\theta_K(\phi))}$, for $K\in[\SG]$ and $\phi\in \Hom(K,A)$, is a ring isomorphism.
\end{theorem}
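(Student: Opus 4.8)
The plan is to deduce Theorem~\ref{SpeciesThm} from Theorem~\ref{Previous Species Thm} by showing the two sets of conditions are equivalent.
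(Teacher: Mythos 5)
Your proposal is only a statement of intent, not a proof: "deduce Theorem~\ref{SpeciesThm} from Theorem~\ref{Previous Species Thm} by showing the two sets of conditions are equivalent" names the strategy but supplies none of the content, and the equivalence of the two sets of conditions is precisely where all the work lies. Concretely, for the forward direction you must take the bijections $\thetaS\colon\SG\to\SH$ and $\theta_K$ ($K\le G$) coming from Theorem~\ref{Previous Species Thm} and convert them into bijections on the chosen representative sets: this requires picking, for each $K\in[\SG]$, an element $h_K\in H$ with $\lexp{h_K}{\thetaS(K)}\in[\SH]$, defining $\thetaBS'(K)=\lexp{h_K}{\thetaS(K)}$ and $\theta'_K(\phi)=\lexp{h_K}{\theta_K(\phi)}$, checking via Remark~\ref{old remark} that $\thetaBS'$ is a bijection, using the conjugation invariance of the numbers $\gamma^H_{(R,\rho),(S,\sigma)}$ to see that condition (a) survives the adjustment, and then verifying condition (b) by establishing the identity $\overline{\Theta'}\circ\pi_{[\SG]}=\pi_{[\SH]}\circ\widetilde{\Theta}$ and using that $\pi_{[\SG]}$ and $\pi_{[\SH]}$ restrict to ring isomorphisms $\gBAG\to\qBAG$ and $\gBAH\to\qBAH$. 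None of this appears in your plan, and the last step in particular is not a formality: it is what transfers the ring-isomorphism property from the ghost ring to its projected version.

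Moreover, your plan does not match what is actually needed (or what the paper does) for the converse direction. Rather than reconstructing bijections on all of $\SG$ and re-entering Theorem~\ref{Previous Species Thm} --- which would force you to address well-definedness of the extensions and again the compatibility of $\widetilde{\Theta}$ with $\overline{\Theta}$ --- the paper argues directly: assuming (a) and (b) for the representative sets, the assignment $[K,\phi]_G\mapsto[\thetaBS(K),\theta_K(\phi)]_H$ (with $K\in[\SG]$) is a bijection of standard bases, extends to a group isomorphism $\Theta\colon\BAG\to\BAH$, and the square formed by $\Theta$, $\overline{\Theta}$ and the injective mark morphisms $\overline{\Phi}^A_G$, $\overline{\Phi}^A_H$ commutes by condition (a); injectivity of the vertical maps and multiplicativity of $\overline{\Theta}$ then force $\Theta$ to be a ring isomorphism, hence a species isomorphism. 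If you intend to prove the converse by literally producing data satisfying the hypotheses of Theorem~\ref{Previous Species Thm}, you must spell out how to extend $\thetaBS$ and the $\theta_K$ from $[\SG]$ to all subgroups and why conditions (a) and (b) of that theorem then hold; as written, your proposal leaves the entire argument, in both directions, unproved.
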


\begin{rem}\label{new remark}
With the same arguments as in Remark~\ref{old remark} one can show that in the situation of Theorem~\ref{SpeciesThm} the condition in (a) implies that the map $\overline{\Theta}$ in (b) is well-defined. More precisely, (a) implies that for any $K\in[\SG]$, $\phi,\psi\in\Hom(K,A)$, one has $(K,\phi)=_G(K,\psi)$ if and only if $(\thetaS(K),\theta_K(\phi))=_H(\thetaS(K),\theta_K(\psi))$. 
\end{rem}

\medskip\noindent
{\it Proof of Theorem~\ref{SpeciesThm}.}\quad
First suppose that there exists a species isomorphism from $\BAG$ to $\BAH$. Then there exist bijections $\thetaS$ and $\theta_K$ for $K\le G$ as in Theorem~\ref{Previous Species Thm}, satisfying the conditions (a) and (b) in Theorem~\ref{Previous Species Thm}. Let $[\SG]$ and $[\SH]$ be given. Then for every $K\in[\SG]$ there exists $h_K\in H$ such that $\lexp{h_K}{\thetaS(K)}\in[\SH]$. We define $\thetaBS'\colon [\SG]\to[\SH]$ by $K\mapsto \lexp{h_K}\thetaS(K)$. The second part of Remark~\ref{old remark} implies that $\thetaBS'$ is a bijection. 
Next, for $K\in[\SG]$, we define $\theta'_K\colon \Hom(K,A)\to \Hom(\thetaBS'(K),A)$ by $\phi\mapsto \lexp{h_K}{\theta_K(\phi)}$, noting that $\thetaBS'(K)=\lexp{h_K}\thetaS(K)$. With $\theta_K$ being a bijection, also $\theta'_K$ is a bijection. By construction, we have $(\thetaBS'(K),\theta'_K(\phi))=\lexp{h_K}{(\thetaS(K),\theta_K(\phi))}$ for every $K\in[\SG]$ and $\phi\in\Hom(K,A)$.
Since the number $\gamma^H_{(R,\rho),(S,\sigma)}$ does not change when one replaces $(R,\rho)$ and $(S,\sigma)$ in $\MAH$ by any $H$-conjugate pairs, and since the original maps $\thetaS$ and $\theta_K$, for $K\le G$, satisfied condition (a) in Theorem~\ref{Previous Species Thm}, the bijections $\thetaS'$ and $\theta'_K$, for $K\in[\SG]$, now satisfy condition (a) in Theorem~\ref{SpeciesThm}.
Let $\widetilde{\Theta}\colon \widetilde{B^A}(G) \to \widetilde{B^A}(H)$ denote the ring isomorphism associated to the bijections $\thetaS$ and $\theta_K$ for $K\le G$ in Theorem~\ref{Previous Species Thm}(b) and let $\overline{\Theta'}\colon \qBAG \to \qBAH$ denote the map associated to the bijections $\thetaBS'$ and $\theta'_K$ for $K\in[\SG]$ in Theorem~\ref{SpeciesThm}(b). Then, for all $K\in[\SG]$ and all $\phi\in\Hom(K,A)$, we have
\begin{align*}
   \overline{\Theta'}\left(\pi_{[\SG]}\left(\widetilde{b}_{(K,\phi)}\right)\right) & = \overline{\Theta'}\left(\overline{b}_{(K,\phi)}\right) =
       \overline{b}_{(\thetaBS'(K),\theta'_K(\phi))} \\
       &= \overline{b}_{\lexp{h_K}{(\thetaS(K),\theta_K(\phi))}} = \pi_{[\SH]}\left(\widetilde{b}_{\lexp{h_K}{(\thetaS(K),\theta_K(\phi))}}\right) \\
       &= \pi_{[\SH]}\left(\widetilde{b}_{(\thetaS(K),\theta_K(\phi))}\right)= \pi_{[\SH]}\left(\widetilde{\Theta}(\widetilde{b}_{(K,\phi)})\right)\,.
 \end{align*} 
 Thus, $\overline{\Theta'}\circ\pi_{[\SG]}=\pi_{[\SH]}\circ\widetilde{\Theta}\colon \widetilde{B^A}(G)\to \overline{B^A}(H)$. Since $\widetilde{\Theta}$, $\pi_{[\SG]}\colon \widetilde{B^A}(G)\to\overline{B^A}(G)$ and $\pi_{[\SH]}\colon\widetilde{B^A}(H)\to\overline{B^A}(H)$ are ring isomorphisms, also $\overline{\Theta'}$ is a ring isomorphism.

\smallskip
Conversely, for each $[K,\phi]_G\in G\backslash\MAG$ we can assume $K\in [\SG]$, and mapping $[K,\phi]_G$ to $[\thetaBS(K),\theta_K(\phi)]_H$ gives a bijection from $G\backslash \MAG$ onto $H\backslash \MAH$. This bijection extends to an isomorphism of abelian groups $\Theta:\BAG\longrightarrow \BAH$. Now the diagram
$$\xymatrix{
\BAG\ar[rr]^-{\Theta}\ar[d]_{\overline{\Phi}^A_G} &&\BAH\ar[d]^{\overline{\Phi}^A_H}\\
\qBAG \ar[rr]_{\overline{\Theta}} &&\qBAH
}$$
commutes, and since the bottom and the vertical arrows are injective ring homomorphisms, also $\Theta$ is a ring isomorphism.
\qed

\bigskip
As remarked in \cite[Cor. 3.12]{Gar1}, some of the $\theta_K$ are necessarily group isomorphisms, but we ignore whether this has to be the case for all these maps. However, when the $\theta_K$ are isomorphisms, we can drop Condition~(b) in Theorem~\ref{SpeciesThm}.

\begin{prop}\thlabel{SpeciesCriterion}
Let $A$ be an abelian group and let $G$ and $H$ be finite groups. Assume that there is a bijection $\thetaBS:[\SG]\longrightarrow [\SH]$ and, for each $K\in[\SG]$, a group isomorphism $\theta_K\colon\Hom(K,A)\longrightarrow \Hom(\theta_{\mathcal{S}}(K),A)$ such that 
$$\gamma^H_{(\thetaBS(K),\theta_{K}(\phi)),(\thetaBS(L),\theta_{L}(\psi))} = \gamma^G_{(K,\phi),(L,\psi)}$$
for all $K,L\in [\SG]$, $\phi\in \Hom(K,A)$, $\psi\in \Hom(L,A)$. Then the assignment $[K,\phi]_G\mapsto [\thetaBS(K),\theta_K(\phi)]_H$ for $K\in [\SG]$ and $\phi\in\Hom(K,A)$ extends to a species isomorphism $\Theta\colon B^A(G) \longrightarrow B^A(H)$.
\end{prop}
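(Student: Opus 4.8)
The plan is to deduce the statement directly from Theorem~\ref{SpeciesThm}. Condition~(a) of that theorem is precisely the hypothesis here, and the species isomorphism produced by the ``converse'' direction of its proof is exactly the additive extension of the assignment $[K,\phi]_G\mapsto[\thetaBS(K),\theta_K(\phi)]_H$; so everything will follow once we verify condition~(b), namely that
$$\overline{\Theta}\colon\qBAG\longrightarrow\qBAH,\qquad \overline{b}_{(K,\phi)}\mapsto\overline{b}_{(\thetaBS(K),\theta_K(\phi))},$$
is a ring isomorphism.

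To do this, I would first note that for each $K\in[\SG]$ the group isomorphism $\theta_K\colon\Hom(K,A)\to\Hom(\thetaBS(K),A)$ extends $\Z$-linearly to an isomorphism of group rings $\Z\theta_K\colon\Z\Hom(K,A)\to\Z\Hom(\thetaBS(K),A)$. The crucial point is that $\Z\theta_K$ restricts to a ring isomorphism $(\Z\Hom(K,A))^{N_G(K)}\to(\Z\Hom(\thetaBS(K),A))^{N_H(\thetaBS(K))}$ between the fixed subrings. Granting this, one forms the product of these restricted isomorphisms over $K\in[\SG]$, using $\thetaBS$ to identify the index set $[\SH]$ of the factors of $\qBAH$ with $[\SG]$; the result is a ring isomorphism $\qBAG\to\qBAH$. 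Evaluating it on a basis element $\overline{b}_{(K,\phi)}$, whose only nonzero component is the $K$-component, equal to the $N_G(K)$-orbit sum of $\phi$, one gets the element whose only nonzero component is the $\thetaBS(K)$-component, equal to $\Z\theta_K$ applied to that orbit sum; since $\Z\theta_K$ carries the $N_G(K)$-orbit sum of $\phi$ to the $N_H(\thetaBS(K))$-orbit sum of $\theta_K(\phi)$ (justified below), this is exactly $\overline{b}_{(\thetaBS(K),\theta_K(\phi))}$. Hence this product isomorphism coincides with $\overline{\Theta}$, which proves~(b).

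The main obstacle is the ``crucial point'' just invoked: that $\Z\theta_K$ respects the fixed subrings. I would establish it by the argument of Remark~\ref{old remark}. Applying the hypothesis with $L=K$, and recalling that two pairs $(K,\phi)$ and $(K,\psi)$ are $G$-conjugate precisely when $\phi=_{N_G(K)}\psi$ in $\Hom(K,A)$ (and likewise for $H$), one obtains that $\phi=_{N_G(K)}\psi$ if and only if $\theta_K(\phi)=_{N_H(\thetaBS(K))}\theta_K(\psi)$. Thus the bijection $\theta_K$ sends each $N_G(K)$-orbit onto an $N_H(\thetaBS(K))$-orbit and induces a bijection $N_G(K)\backslash\Hom(K,A)\to N_H(\thetaBS(K))\backslash\Hom(\thetaBS(K),A)$; consequently $\Z\theta_K$ maps the $N_G(K)$-orbit sum of any $\phi$ to the $N_H(\thetaBS(K))$-orbit sum of $\theta_K(\phi)$. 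Since these orbit sums form $\Z$-bases of $(\Z\Hom(K,A))^{N_G(K)}$ and of $(\Z\Hom(\thetaBS(K),A))^{N_H(\thetaBS(K))}$ respectively, $\Z\theta_K$ maps the first subring $\Z$-linearly onto the second, and, being a ring homomorphism already, its restriction is a ring isomorphism. This completes the verification of~(b), and hence, via Theorem~\ref{SpeciesThm}, the proof.
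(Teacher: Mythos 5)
Your proposal is correct and follows essentially the paper's own route: you verify condition (b) of Theorem~\ref{SpeciesThm} by showing that the $\Z$-linear extensions of the $\theta_K$ carry $N_G(K)$-orbit sums to $N_H(\thetaBS(K))$-orbit sums (the orbit-preservation argument of Remarks~\ref{old remark} and~\ref{new remark}), so that $\overline{\Theta}$ is the restriction of a product ring isomorphism, which is exactly the commuting-diagram argument in the paper's proof.
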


\begin{proof}
For $K\in [\SG]$ and $\phi\in \Hom(K,A)$, the assignment $\overline{b}_{(K,\phi)}\mapsto \overline{b}_{(\thetaBS(K),\theta_K(\phi))}$ extends to an isomorphism $\overline{\Theta}:\overline{B^A}(G)\longrightarrow \overline{B^A}(H)$ of abelian groups. Then the diagram
$$\xymatrix{
\overline{B^A}(G)\ar@{^{(}->}[d]\ar[rr]^{\overline{\Theta}} &&\overline{B^A}(H)\ar@{^{(}->}[d]\\
\prod_{K\in[\SG]}\Z \Hom(K,A)\ar[rr]_{(\theta_{K})} &&\prod_{L\in[\SH]}\Z \Hom(L,A)\\
}$$
where $\theta_K\colon\Z \Hom(K,A)\longrightarrow \Z \Hom(\theta_{\mathcal{S}}(K),A)$ is the $\Z$-linear extension of $\theta_K$ and the vertical arrows are the inclusions, commutes by Remark~\ref{new remark}. Note that the bottom map is a ring isomorphism, since each $\theta_K$  was a group isomorphism. Therefore, as the bottom and the vertical arrows are injective, also $\overline{\Theta}$ is a ring isomorphism. By \thref{SpeciesThm} and its proof, the maps $\thetaBS$ and $\theta_K$ determine a species isomorphism.
\end{proof}

\section{Nontrivial counterexamples}

We recall the construction of Thévenaz' counterexamples in \cite{Thev}. Let $p$ and $q\geq 3$ be prime numbers such that $q|(p-1)$, and take elements $a\neq b$ of order $q$ in $(\Z/p\Z)^{\times}$. Let $P_a=\Z/p\Z=\langle x\rangle$, $P_b= \Z/p\Z=\langle y\rangle$ and $Q=C_q=\langle z\rangle$, then let $Q$ act on $P_a\oplus P_b$ by $\lexp{z}{x} = ax$ and $\lexp{z}{y}=by$, and consider the resulting semidirect product $G(a,b)=(P_a\oplus P_b)\rtimes Q$. A complete set of representatives of the conjugacy classes of subgroups of $G(a,b)$ is $\{1\}$, $P_a$, $P_b$, $P(j) = \langle x+jy\rangle$ for $j\in [(\Z/p\Z)^{\times}/\langle a\rangle]$, $P_a\oplus P_b$, $Q$, $P_a\rtimes Q$, $P_b\rtimes Q$ and $G(a,b)$. Taken in this order, the table of marks of $G(a,b)$ is independent of the choice of $\{a,b\}$. For fixed $p$ and $q$, there are precisely $\frac{q-1}{2}$ isomorphism classes of these groups; in fact, if also $c\neq d$ are elements of order $q$ in $(\Z/p\Z)^{\times}$ then $G(a,b)\cong G(c,d)$ if and only if there exists $n\in\{1.\ldots,q-1\}$ such that $\{c,d\}=\{a^n,b^n\}$ (see \cite{Thev}). Note that there are infinitely many choices for such $p$ and $q$: taking any prime $q\geq 3$, then by Dirichlet's theorem there are infinitely many primes $p$ in the arithmetic progression $1+q,1+2q,1+3q,\ldots$.

\begin{theorem} \thlabel{ThevenazGroups}
Let $p$ and $q\ge3$ be primes with $q$ dividing $p-1$ and let $a\neq b$ and $c\neq d$ be elements of order $q$ in $(\Z/p\Z)^{\times}$. If $A$ has trivial $p$-torsion, then $B^A(G(a,b))$ and $B^A(G(c,d))$ are isomorphic rings.
\end{theorem}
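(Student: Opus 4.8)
The plan is to apply \thref{SpeciesCriterion} to the groups $G = G(a,b)$ and $H = G(c,d)$. Since $A$ has trivial $p$-torsion, every homomorphism $\phi\colon K\to A$ kills the $p$-part of $K$; hence $\Hom(K,A)$ depends only on the abelianized $q$-part $K/O^q(K)$, which is trivial for $K\in\{1,P_a,P_b,P(j),P_a\oplus P_b\}$ and cyclic of order $q$ for $K\in\{Q,P_a\rtimes Q,P_b\rtimes Q,G(a,b)\}$. So for each representative subgroup, $\Hom(K,A)$ is canonically the group $A[q]$ of $q$-torsion of $A$ (possibly trivial), and the same holds on the $H$-side. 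First I would set up the bijection $\thetaBS\colon[\SG]\to[\SH]$: both lists of conjugacy class representatives have the same shape $\{1,P_a,P_b,\{P(j)\},P_a\oplus P_b,Q,P_a\rtimes Q,P_b\rtimes Q,G\}$, and since $\thetaBS$ is allowed to permute the $\frac{q-1}{2}$ subgroups $P(j)$ arbitrarily, the combinatorial data of the table of marks being $\{a,b\}$-independent (recalled in the paragraph preceding the theorem) gives the obvious candidate for $\thetaBS$. For each $K$ I would take $\theta_K$ to be the identity $A[q]\to A[q]$ under the canonical identifications, except on the subgroups carrying a nontrivial $\Hom$ — namely $Q$, $P_a\rtimes Q$, $P_b\rtimes Q$, $G$ — where one must be a bit careful that the identification with $A[q]$ is compatible with the $N_G(K)$-action and with the conjugation maps $\lexp{s}{\psi}$ that appear in the $\gamma$-numbers.

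The core of the argument is verifying condition (a): $\gamma^H_{(\thetaBS(K),\theta_K(\phi)),(\thetaBS(L),\theta_L(\psi))} = \gamma^G_{(K,\phi),(L,\psi)}$ for all representatives $K,L$ and all $\phi\in\Hom(K,A)$, $\psi\in\Hom(L,A)$. Recall $\gamma^G_{(K,\phi),(L,\psi)} = |\{sL\in G/L : (K,\phi)\le \lexp{s}{(L,\psi)}\}|$, i.e.\ the number of cosets $sL$ with $K\le\lexp{s}{L}$ and $\phi = \lexp{s}{\psi}|_K$. I would organize the computation by cases on the pair $(K,L)$. When both $\phi,\psi$ are forced to be trivial (i.e.\ $K$ or $L$ among the first five subgroups, where $\Hom(\cdot,A)$ is trivial), the condition $\phi = \lexp{s}{\psi}|_K$ is automatic and $\gamma^G_{(K,\phi),(L,\psi)}$ reduces to the plain Burnside-ring mark $\gamma^G_{(K,1),(L,1)} = |\{sL : K\le\lexp{s}{L}\}|$, which equals the corresponding entry on the $H$-side because the tables of marks agree by Thévenaz — this handles the bulk of the $(K,L)$ pairs with essentially no work. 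The remaining, genuinely fibered cases are those where both $K$ and $L$ lie in $\{Q,P_a\rtimes Q,P_b\rtimes Q,G\}$; here one must actually track the homomorphisms. For these, $\lexp{s}{L}$ and the restriction map on $q$-parts can be computed explicitly: the point is that for $s$ ranging over the relevant cosets, the induced map $\Hom(\lexp{s}{L},A)\to\Hom(K,A)$ corresponds, under the canonical identifications with $A[q]$, to a map that does not depend on the choice $\{a,b\}$ versus $\{c,d\}$ — because the action of $Q$ on the $P$-part, which is where $a$ and $b$ enter, is invisible to $A[q]$-valued homomorphisms. I would spell this out for the finitely many subgroup pairs, matching $\lexp{s}{\psi}|_K$ with $\theta_L(\psi)$ conjugated and restricted on the $H$-side via $\thetaBS$.

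The step I expect to be the main obstacle is the bookkeeping for the subgroups $P_a\rtimes Q$, $P_b\rtimes Q$ and $G$ in relation to $Q$ and its conjugates: one has to be careful that $\thetaBS$ sends $P_a\rtimes Q$ and $P_b\rtimes Q$ (on the $G(a,b)$ side) to $P_c\rtimes Q'$ and $P_d\rtimes Q'$ (on the $G(c,d)$ side) in a way consistent with the normalizer actions, and that the various conjugates of $Q$ inside $P_a\rtimes Q$ — there are $p$ of them — are matched up so that the count of cosets in $\gamma^G_{(Q,\phi),(P_a\rtimes Q,\psi)}$ and its transpose come out right for every $\phi,\psi\in A[q]$. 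Concretely, $N_G(Q) = Q$ while $N_G(P_a\rtimes Q)\supseteq P_a\rtimes Q$, and the restriction $\Hom(P_a\rtimes Q,A)\to\Hom(Q,A)$ along with the conjugation action needs to be checked to be "the identity on $A[q]$" in both groups; since the abelianization of $P_a\rtimes Q$ is $Q$ (as $q\mid p-1$ forces the action of $Q$ on $P_a$ to be nontrivial, so $[P_a\rtimes Q,P_a\rtimes Q]\supseteq P_a$), this is true, but it requires choosing the generators of the $Q$-subgroups coherently. Once these finitely many mark-number identities are checked, \thref{SpeciesCriterion} immediately produces the species isomorphism $B^A(G(a,b))\to B^A(G(c,d))$, and a fortiori a ring isomorphism, completing the proof. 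I would also remark that when $A$ has no $q$-torsion at all, $A[q]$ is trivial, all $\Hom(K,A)$ vanish, and the statement collapses to Thévenaz' original table-of-marks isomorphism $B(G(a,b))\cong B(G(c,d))$; the content is the case where $A$ does have elements of order $q$, which is precisely the non-trivial regime advertised in the introduction.
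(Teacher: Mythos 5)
Your proposal is correct and follows essentially the same route as the paper's proof: apply \thref{SpeciesCriterion} with the obvious bijection $\thetaBS$ coming from Th\'evenaz' matching of tables of marks and with $\theta_K$ determined by the value at $z$ (possible since trivial $p$-torsion makes all $\Hom(K,A)$ factor through the $q$-part), reduce all pairs involving a $p$-subgroup to ordinary marks, and handle the pairs with $K,L\in\{Q,P_a\rtimes Q,P_b\rtimes Q,G\}$ by the observation that the $\{a,b\}$-dependence is invisible to such homomorphisms. The only piece you leave schematic is the explicit count in that last case, which the paper settles by noting that $g\notin L$ forces $z\notin\lexp{g}{L}$, so that $\gamma^G_{(K,\phi),(L,\psi)}$ is $1$ or $0$ according to whether $\phi(z)=\psi(z)$; your stated reason is the right one and this verification is routine (also, a minor slip: the number of subgroups $P(j)$ is $(p-1)/q$, not $\frac{q-1}{2}$).
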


\begin{proof}
Since we already know that these groups have isomorphic Burnside rings, we can assume that $A$ has elements of order $q$. For simplicity, set $G:=G(a,b)$ and $H:=G(c,d)$ and take $[\SG]$ and $[\SH]$ as in the first paragraph of this section. We let $\thetaBS:[\SG]\longrightarrow [\SH]$ be the obvious bijection inducing an isomorphism of the tables of marks and set $K':=\thetaBS(K)$ for $K\in [\SG]$. Next we define group isomorphisms $\theta_K\colon\Hom(K,A)\longrightarrow \Hom(K',A)$ for $K\in[\SG]$: if $K$ is a $p$-subgroup so is $K'$, and $\Hom(K,A)$ and $\Hom(K',A)$ are trivial, hence there is only one choice for $\theta_K$; if $K$ is not a $p$-subgroup, then a homomorphism $\phi\colon K\longrightarrow A$ is determined by the value $\phi(z)$ which is either an element of order $q$ or $1$, and we define $\phi':=\theta_K(\phi)\colon K'\longrightarrow A$ by requiring $\phi'(z)=\phi(z)$.

\smallskip
We now compare $\gamma^G_{(K,\phi),(L,\psi)}$ and $\gamma^H_{(K',\phi'),(L',\psi')}$ for $K,L\in [\SG]$, $\phi\in \Hom(K,A)$ and $\psi\in \Hom(L,A)$. First, since $\thetaBS$ preserves the marks, we have 
$$\gamma^G_{(K,1),(L,1)}=|(G/L)^K|=|(H/L')^{K'}|=\gamma^H_{(K',1),(L',1)},$$
for all $K$ and $L$ in $[\SG]$. Note that the subgroups $P_a$, $P_b$ and $P_a\oplus P_b$ are normal, while $N_G(P(j))=P_a\oplus P_b$, and $P_a\rtimes Q$, $P_b\rtimes Q$ and $Q$ are self-normalizing. Moreover, it is straightforward to verify that, for $K$ and $L$ in $[\SG]$, if $K\not\leq L$ then $K\not\leq_{G} L$, and since $\thetaBS$ preserves containments, then $\gamma^G_{(K,\phi),(L,\psi)}=\gamma^H_{(K',\phi'),(L',\psi')}=0$ whenever $K\not\leq L$. 

\smallskip
Therefore, we are left with the case when $K\leq L$, $L$ is not a $p$-subgroup and $\psi\neq 1$, and we distinguish the following cases for $K$: 

\smallskip
(i) If $K\in\{\{1\}, P_a, P_b, P_a\oplus P_b\}$ then $K\unlhd G$ and $\phi=1$. Therefore $K\leq {\lexp{g}{L}}$ and $\lexp{g}{\psi}|_K=1$ for all $g\in G$ and hence
    $$\gamma^G_{(K,1),(L,\psi)}=[G:L]=[H:L']=\gamma^H_{(K',1),(L',\psi')}\,.$$
    
 \smallskip
 (ii) If $K=P(j)$ we may assume that $L=G$.  In this case, $\gamma_{(K,1),(G,\psi)}^G= 1 = \gamma_{(K',1),(H,\psi')}^H$.
    
 \smallskip
 (iii) If $K$ is not a $p$-subgroup, then $K,L\in\{Q, P_a\rtimes Q, P_b\rtimes Q, G\}$, and in particular $z\in K\le L$. It is straightforward to verify that in all cases for $L$ the following holds: if $g\in G$ with $g\notin L$ then $z\notin\lexp{g}{L}$ and therefore $K\not\leq \lexp{g}{L}$. 
We can conclude that
    $$\gamma^{G}_{(K,\phi),(L,\psi)}=\begin{cases}
    1 &\text{if}\;\phi(z)=\psi(z),\\
    0 &\text{otherwise},
    \end{cases}$$
    and by the way we have defined $\theta_K$, we have that $\gamma^{H}_{(K',\phi'),(L',\psi')}=\gamma^{G}_{(K,\phi),(L,\psi)}$.

\smallskip
We conclude that $\thetaBS$ and the isomorphisms $\theta_K$ for $K\in [\SG]$ satisfy the condition of \thref{SpeciesCriterion}, hence they determine a species isomorphism.
\end{proof}

\begin{rem}
The above theorem shows that Th\'evenaz' counterexamples to the isomorphism problem for the Burnside ring are also non-trivial counterexamples for the $A$-fibered Burnside ring if $A$ is any abelian group with trivial $p$-torsion and having elements of order $q$. In particular, we have a negative answer to the isomorphism problem of the $C_q$-fibered Burnside ring for any prime $q\geq 5$. Moreover, for any field $k$ of characteristic $p$, we obtain $D^k(G(a,b))\cong D^k(G(c,d))$, since in this case $D^k(G(a,b))\cong B^{C_q}(G(a,b))$. However, the existence of non-isomorphic finite groups $G$ and $H$ with $D^{\mathbb{C}}(G)\cong D^{\mathbb{C}}(H)$ remains open.
\end{rem}


\end{document}